\numberwithin{equation}{section}
\theoremstyle{plain}
\newtheorem{theorem}{Theorem}
\newtheorem{proposition}[theorem]{Proposition}
\newtheorem{corollary}[theorem]{Corollary}
\theoremstyle{definition}
\theoremstyle{remark}
\lstdefinelanguage{Matlab}{
  keywords={break,case,catch,classdef,continue,else,elseif,end,for,function,global,if,otherwise,persistent,return,spmd,switch,try,while,parfor},
  keywordstyle=\color{blue}\bfseries,
  comment=[l]\%, commentstyle=\color{gray}\ttfamily,
  stringstyle=\color{red}, morestring=[m]', morestring=[m]"
}
\def\expm{\t{expm}}
\def\phifunm{\t{phi\_funm}}
\def\phimv{\t{phimv}}
\def\bamphi{\t{bamphi}}
\def\kiops{\t{kiops}}
\def\phipm{\t{phipm}}
\def\vphi{\varphi}
\def\exact{\mathop{\mathrm{exact}}}
\def\C{\mathbb{C}}
\def\and{\mathop{\mathrm{and}}}
\def\Htmd{\Ht@@'_{2m+1}}
\def\t#1{\texttt{\upshape #1}}
\def\a{\alpha}
\def\D{\Delta}
\def\resp{respectively}
\def\tol{\mathrm{tol}}
\def\trace{\mathrm{trace}}
\def\diag{\mathrm{diag}}
\def\1i{\mathrm{i}}
\def\e{\mathrm{e}}         
\def\norm#1{\|#1\|}
\def\normt#1{\|#1\|_2}
\def\nbyn{n \times n}
\def\R{\mathbb{R}}
\def\N{\mathbb{N}}
\def\cD{\mathcal{D}}
\title{Computing Linear Combinations of $\varphi$-Function Actions for Exponential Integrators}
\author{Awad H.~Al-Mohy}
\date{\today} 
\begin{document}
\maketitle


\begin{abstract}
We propose a matrix-free algorithm for evaluating linear combinations of $\varphi$-function actions, $w_i := \sum_{j=0}^{p} \alpha_i^{\,j}\,\varphi_j(t_i A)v_j$ for $i=1\colon r$, arising in exponential integrators.
The method combines the scaling and recovering method with a truncated Taylor series, choosing a spectral shift and a scaling parameter by minimizing a power-based objective of the shifted operator.
Accuracy is user-controlled and ultimately limited by the working precision.
The algorithm decouples the stage abscissae $t_i$ from the polynomial weights $\alpha_i^j$, and a block variant enables simultaneous evaluation of $\{w_i\}_{i=1}^r$.
Across standard benchmarks, including stiff and highly nonnormal matrices, the algorithm attains near-machine accuracy (IEEE double precision in our tests) for small step sizes and maintains reliable accuracy for larger steps where several existing Krylov-based algorithms deteriorate, providing a favorable balance of reliability and computational cost.
\end{abstract}
\paragraph{Keywords.}
Exponential integrators; $\varphi$-functions; matrix exponential; matrix-free algorithms.

\section{Introduction}
The evaluation of the action of the matrix exponential and its related functions
plays a significant role in the exponential time integration methods for semilinear problem of the form
\begin{equation}\label{ivp}
u'(t) = f(t,u(t)) = A\,u(t)+g(t,u(t)), \quad u(t_0) = u_0,
\end{equation}
where $u(t)\in\C^n$, $A\in\C^{n\times n}$ is a matrix representing spatial discitization of a partial differential equation, and $g$ is a nonlinear vector function. 
Exponential integrator schemes for the problem \eqref{ivp} involve linear combinations of the form 
\cite{alhi11, grt18, hoos10, hls98, koos13, niwr12}
\begin{equation}\label{phi.comb}
w_i := \sum_{j=0}^{p} \alpha_i^{\,j}\,\varphi_j(t_i A)v_j,\quad i=1\colon r,
\end{equation}
where $t_i$ are stage abscissae, $\a_i^j$ are polynomial weights, and
\begin{equation}\label{def.phik}
	\varphi_0(tA) = \e^{tA}, \quad 
	\varphi_j(tA) = 
	\sum_{k=0}^{\infty} \frac{t^kA^k}{(k+j)!}, \quad j \in\N^+,
\end{equation}
For instance, the fourth-order six-stage exponential Runge–Kutta integrator (\textsc{expRK4s6}) proposed by Luan~\cite{valu21} takes the form:
\begin{equation}\label{expRK4s6}
\begin{aligned}
U_{n2} &= u_n + \varphi_{1}(c_{2}hA)\, c_{2}h\,f(t_n,u_n),\\
U_{n,k} &= u_n + \varphi_{1}(c_{k}hA)\, c_{k}h\,f(t_n,u_n)
          + \varphi_{2}(c_{k}hA)\,\frac{c_k^{2}}{c_{2}}\, h\, D_{n2}, && k=3,4,\\
U_{n,j} &= u_n + \varphi_{1}(c_{j}hA)\, c_{j}h\,f(t_n,u_n)
          + \varphi_{2}(c_{j}hA)\,\frac{c_j^{2}}{\,c_{3}-c_{4}\,}\, h\!\left(-\frac{c_{4}}{c_{3}}D_{n3}+\frac{c_{3}}{c_{4}}D_{n4}\right) \\
        &\quad\; + \varphi_{3}(c_{j}hA)\,\frac{2c_j^{3}}{\,c_{3}-c_{4}\,}\, h\!\left(\frac{1}{c_{3}}D_{n3}-\frac{1}{c_{4}}D_{n4}\right), && j=5,6,\\
u_{n+1} &= u_n + \varphi_{1}(hA)\, h\,f(t_n,u_n)
          + \varphi_{2}(hA)\,\frac{1}{\,c_{5}-c_{6}\,}\, h\!\left(-\frac{c_{6}}{c_{5}}D_{n5}+\frac{c_{5}}{c_{6}}D_{n6}\right) \\
        &\quad\; + \varphi_{3}(hA)\,\frac{2}{\,c_{5}-c_{6}\,}\, h\!\left(\frac{1}{c_{5}}D_{n5}-\frac{1}{c_{6}}D_{n6}\right),
\end{aligned}
\end{equation}
where $h$ is the time step size, $D_{ni} := g(t_n + c_i h, U_{ni}) - g(t_n, u_n)$ for $2 \le i \le 6$, and
$c_{2}=c_{3}=1/2$, $c_{4}= c_6 = 1/3$, $c_{5}=5/6$.
Given an algorithm for evaluating such linear combinations, scheme~\eqref{expRK4s6} requires six invocations per step (e.g., the algorithm of Niesen and Wright~\cite{niwr12}). 
However, the stages $\{U_{n3},U_{n4}\}$, on the one hand, and $\{U_{n5},U_{n6}\}$, on the other, are independent—so each pair can be computed simultaneously (in parallel) once the required increments $D_{n2}$ and $\{D_{n3},D_{n4}\}$ are available. 
Luan \emph{et al.}~\cite{lpr19}, Gaudreault \emph{et al.}~\cite{grt18}, and Caliari \emph{et al.}~\cite{ccz23} propose algorithms that reduce the number of calls per step to \emph{four} by evaluating the independent stage pairs simultaneously.

The main scope of this manuscript is to develop an algorithm that computes the linear combinations \eqref{phi.comb} simultaneously. 

Our work is organized as follows.
Section~\ref{sec.MD} derives the method, building on Al-Mohy’s analysis~\cite{almo24}, where an operator $\cD_f$ acting on a matrix function $f$ is defined and its properties are analyzed; this operator underpins our analysis and derivation.
Section~\ref{sec.param} presents a Taylor series error analysis and our strategy for selecting the algorithmic parameters: we specify how to compute the scaling parameter and an optimal spectral shift, and we introduce a truncation criterion for the Taylor series.
Section~\ref{sec.alg} describes two versions of the algorithm.
Section~\ref{sec.num} reports extensive numerical experiments.
Finally, Section~\ref{sec.conc} offers concluding remarks.

\section{Method derivation}\label{sec.MD}
In this section we present the theoretical framework of our algorithm for computing the linear combinations \eqref{phi.comb}.
We begin with the following theorem.
\begin{theorem}\label{thm1}
Let $A\in\C^{\nbyn}$, $V=\begin{bmatrix}
                       v_0 & v_p & v_{p-1} & \cdots & v_1 
                     \end{bmatrix}\in\C^{n\times (p+1)}$, $J=[0]\oplus J_p(0)$, where $J_p(0)\in\C^{p\times p}$ is 
the Jordan block associated with zero, and 
\begin{equation}\label{rec.D}
S = \sum_{k=1}^{\infty}\frac{D_k}{k!},\quad 
D_k=(A/s)D_{k-1}+D_1(J/s)^{k-1},\quad D_1=V/s,\quad s\in\N^+.
\end{equation}
Then the recurrence 
\begin{equation}\label{rec.F}
F_{k+1} = \e^{A/s}F_k + S\e^{kJ/s}, \quad F_1=S,\quad k=1\colon s-1
\end{equation}
yields
\begin{equation}\label{eq.Fk}
F_s = \begin{bmatrix}
        \vphi_1(A)v_0 & \vphi_1(A)v_p & \vphi_1(A)v_{p-1}+\vphi_2(A)v_p & \cdots & \sum_{k=1}^{p}\vphi_k(A)v_k 
      \end{bmatrix},
\end{equation}
where the $j$th column of $F_s$ is $F_s(:,j) = \sum_{k=1}^{j-1}\vphi_k(A)v_{p-j+k+1}$, $2\le j\le p+1$. 
Consequently, we have $\e^Av_0 = AF_s(:,1)+v_0$.
\end{theorem}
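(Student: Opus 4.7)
The plan is to reinterpret $F_s$ as the $(1,2)$ block of $\exp(\tilde A)$ for the augmented block matrix
$$\tilde A=\begin{bmatrix} A & V\\ 0 & J\end{bmatrix}\in\C^{(n+p+1)\times(n+p+1)},$$
and then expand that block explicitly. Three ingredients are needed: (i) identifying $S$ with the $(1,2)$ block of $\exp(\tilde A/s)$; (ii) propagating this identification through the recurrence via the semigroup relation $\exp((k+1)\tilde A/s)=\exp(\tilde A/s)\exp(k\tilde A/s)$; and (iii) computing the $(1,2)$ block of $\exp(\tilde A)$ in closed form.

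For (i), the powers $(\tilde A/s)^k$ inherit block-triangular structure with upper-right entries $Q_k$ satisfying $Q_1=V/s$ and $Q_k=(A/s)Q_{k-1}+(V/s)(J/s)^{k-1}$. This is exactly the recurrence \eqref{rec.D} defining $D_k$, so summing $Q_k/k!$ over $k\ge 1$ shows $S$ coincides with the $(1,2)$ block of $\exp(\tilde A/s)$. For (ii), the block product
$$\begin{bmatrix} e^{A/s} & S\\ 0 & e^{J/s}\end{bmatrix}\begin{bmatrix} e^{kA/s} & F_k\\ 0 & e^{kJ/s}\end{bmatrix}=\begin{bmatrix} e^{(k+1)A/s} & e^{A/s}F_k+Se^{kJ/s}\\ 0 & e^{(k+1)J/s}\end{bmatrix}$$
has upper-right entry equal to the right-hand side of \eqref{rec.F}. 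Induction on $k$ therefore identifies $F_k$ with the $(1,2)$ block of $\exp(k\tilde A/s)$ for $k=1,\ldots,s$, and in particular $F_s$ with that of $\exp(\tilde A)$.

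For (iii), I would expand $\exp(\tilde A)=\sum_{k\ge 0}\tilde A^k/k!$, swap the order of summation, and absorb the inner series into $\varphi$-functions to obtain
$$\text{(1,2) block of }\exp(\tilde A)=\sum_{\ell=0}^{p-1}\varphi_{\ell+1}(A)\,V J^{\ell},$$
with the upper limit following from $J^p=0$. To read off the columns I note that $J^\ell e_1=0$ for $\ell\ge 1$ while $J^\ell e_j=e_{j-\ell}$ for $j\ge\ell+2$, and the reversed ordering $V=[v_0,v_p,v_{p-1},\ldots,v_1]$ gives $Ve_1=v_0$ and $Ve_{j-\ell}=v_{p-j+\ell+2}$ for $j-\ell\ge 2$. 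Substituting and reindexing via $k=\ell+1$ reproduces column 1 as $\varphi_1(A)v_0$ and column $j$ (for $2\le j\le p+1$) as $\sum_{k=1}^{j-1}\varphi_k(A)v_{p-j+k+1}$, matching \eqref{eq.Fk}. The closing identity $e^A v_0=v_0+AF_s(:,1)$ then follows from $e^A=I+A\varphi_1(A)$.

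The one nontrivial bookkeeping step is the index translation in (iii): because $V$ stores $v_0$ at position 1 followed by $v_p,v_{p-1},\ldots,v_1$, the symbolic action of $J^\ell$ shifts and zeros entries in a way that must be carefully converted to the claimed $v_{p-j+k+1}$ pattern. Everything else is routine block-matrix manipulation.
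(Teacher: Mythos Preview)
Your proof is correct and is essentially a direct, unpacked version of the paper's argument. Both identify $F_s$ with the $(1,2)$ block of $\exp\!\begin{bmatrix}A&V\\0&J\end{bmatrix}$; the paper does this through the $\cD$-operator formalism of \cite{almo24} (using its chain and product rules) and then invokes \cite[Thm.~2.1]{alhi11} for the column formula, whereas you compute the block powers, use the semigroup identity, and read off the columns by hand. Your route is more self-contained (no external lemmas needed) and makes the bookkeeping with $J^\ell e_j$ explicit; the paper's version is terser and highlights that the recurrences \eqref{rec.D}--\eqref{rec.F} are instances of general $\cD$-operator rules, which is useful later when the same machinery is reused for the shifted recurrence \eqref{cD.shift}.
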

\begin{proof}

We have $e^z = \left(\e^{z/s}\right)^s$ for any $z\in\C$ and any natural number $s$.
Applying the linear operator $\cD$ defined in \cite[Def.~1.1]{almo24} at the ordered pair $(A,J)$ in direction $V$ using the chain rule~ \cite[Lem.~1.2\,(6)]{almo24}, we obtain
\begin{equation}\label{Dexp}
\cD_{\exp}(A, J, V) = \cD_{z^s} \left( \e^{A/s},\e^{J/s}, S\right),
\end{equation}
where $S =\cD_{\exp}(A/s,J/s, V/s)$. We have
$
\e^z = \sum_{k=0}^{\infty}z^k/k!
$.
Thus,
$
S = \sum_{k=1}^{\infty} \cD_{z^k}\left( A/s,J/s,V/s \right)/k!.
$
Since $z^k = z \cdot z^{k-1}$, we have
\[
D_k:=\cD_{z^k}\left( A/s,J/s,V/s \right) = \left( \frac{A}{s} \right)\cD_{z^{k-1}} + \left( \frac{V}{s} \right) \left( \frac{J}{s} \right)^{k-1},
\]
obtained from the product rule of the operator $\cD$. Thus, the product rule applied again to the right hand side of \eqref{Dexp} yields 
\[
\cD_{\exp}(A, J, V) 
= \e^{A/s} \cD_{z^{s-1}} + S\left(\e^{J/s}\right)^{s-1}=:F_s
\]

It remains to clarify the relation \eqref{eq.Fk}. By \cite[Eq.~(1.2)]{almo24}, $\cD_{\exp}(A,J,V)$ is the $(1,2)$ block of 
$\exp\left(\begin{bmatrix}
   A & V \\
   0 & J 
 \end{bmatrix}\right)$. Reblocking the matrix as
\[
 \begin{bmatrix}
   A & V \\
   0 & J 
 \end{bmatrix}
 =\begin{bmatrix}
    \begin{bmatrix}
   A & v_0 \\
   0 & 0
 \end{bmatrix} & \begin{matrix}
                   v_p & v_{p-1} & \cdots & v_1 \\
                   0 & 0 & \cdots & 0 
                 \end{matrix} \\
    0 & J_p(0) 
  \end{bmatrix},
\]
taking the exponential of both sides, and applying \cite[Thm.~2.1]{alhi11} on the right hand side for the matrix 
$
\begin{bmatrix}
  A & v_0 \\
  0 & 0 
\end{bmatrix}
$
and the vectors $\begin{bmatrix}
                   v_j \\
                   0 
                 \end{bmatrix}$, $j=1\colon p$ using the relation
\[\vphi_{\ell}\left(\begin{bmatrix}
    A & v_0 \\
    0 & 0 
  \end{bmatrix}\right)=\begin{bmatrix}
                         \vphi_{\ell}(A) & \vphi_{\ell+1}(A)v_0 \\
                         0 & \frac{1}{\ell!}
                       \end{bmatrix}, 
\]                                       
                 the result follows immediately.
\end{proof}
This theorem leads to the following important corollary.
\begin{corollary}
Under the assumptions of Theorem \ref{thm1} with the matrix $J$ replaced by 
$\a\, J$ for some nonzero $\a\in\C$, the first and last columns of the resulting matrix $F_s^{(\a)}$ are
\begin{equation*}
F_s^{(\a)}(:,1) = \vphi_1(A)v_0,\quad F_s^{(\a)}(:,p+1) = \vphi_1(A)v_1+\a\vphi_2(A)v_2+\a^2\vphi_3(A)v_3+\cdots+\a^{p-1}\vphi_p(A)v_p.
\end{equation*}
\end{corollary}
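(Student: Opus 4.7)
My plan is to deduce the corollary from Theorem~\ref{thm1} via a diagonal similarity that recasts $\alpha J$ as $J$. The key algebraic fact is that
\[
D_\alpha^{-1} J_p(0)\, D_\alpha = \alpha J_p(0), \quad D_\alpha := \diag(1, \alpha, \ldots, \alpha^{p-1}),
\]
since diagonal conjugation of the Jordan block rescales its superdiagonal by $\alpha$ (this is where $\alpha \neq 0$ enters). Padding with a leading $1$, $\tilde D := [1] \oplus D_\alpha$ satisfies $\tilde D^{-1} J \tilde D = \alpha J$.

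Conjugating the full augmented matrix by $P := I_n \oplus \tilde D$ gives
\[
P \begin{bmatrix} A & V \\ 0 & \alpha J \end{bmatrix} P^{-1} = \begin{bmatrix} A & V \tilde D^{-1} \\ 0 & J \end{bmatrix}.
\]
Applying $\exp$ to both sides and reading off the $(1,2)$ block, which is $\cD_{\exp}$ by \cite[Eq.~(1.2)]{almo24}, the identity $\exp(PMP^{-1}) = P\exp(M)P^{-1}$ yields $F_s^{(\alpha)} = \cD_{\exp}(A, J, V \tilde D^{-1})\, \tilde D$. Thus $F_s^{(\alpha)}$ is obtained by applying Theorem~\ref{thm1} with the rescaled direction $V \tilde D^{-1}$ in place of $V$, followed by a postmultiplication by $\tilde D$ that simply rescales the columns.

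The target columns are then read off via \eqref{eq.Fk}. For the first column, $(V \tilde D^{-1})(:,1) = v_0$ and $\tilde D_{1,1} = 1$, so $F_s^{(\alpha)}(:, 1) = \varphi_1(A) v_0$ immediately. For the last column, the $\ell$-th column of $V \tilde D^{-1}$ equals $\alpha^{-(\ell - 2)} v_{p-\ell+2}$ for $\ell \ge 2$, so under Theorem~\ref{thm1}'s reverse ordering the effective ``$v_j$'' becomes $\alpha^{-(p-j)} v_j$; substituting into \eqref{eq.Fk} at $j = p+1$ gives $\sum_{k=1}^{p} \alpha^{-(p-k)} \varphi_k(A) v_k$, and postmultiplying by $\tilde D_{p+1, p+1} = \alpha^{p-1}$ collapses this to $\sum_{k=1}^{p} \alpha^{k-1} \varphi_k(A) v_k$, the claimed expression. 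The only delicate step is keeping the bookkeeping straight between the reverse column ordering $V = [v_0, v_p, v_{p-1}, \ldots, v_1]$ and the diagonal scaling pattern of $\tilde D$; no further analytic work is required, since the similarity fully reduces the corollary to Theorem~\ref{thm1}.
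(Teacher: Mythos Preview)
Your proof is correct and follows essentially the same approach as the paper: both reduce $\alpha J$ to $J$ via a diagonal similarity and then invoke Theorem~\ref{thm1}. The paper uses $\Delta=\diag(1,\alpha,\ldots,\alpha^{p})$ and cites \cite[Lem.~1.2(2)]{almo24} for the identity $\cD_{\exp}(A,\alpha J,V)=\cD_{\exp}(A,J,V\Delta^{-1})\Delta$, whereas you take the equivalent choice $\tilde D=[1]\oplus\diag(1,\alpha,\ldots,\alpha^{p-1})$ and derive the same identity directly by block conjugation; the bookkeeping and conclusions are identical.
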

\begin{proof}
Consider the diagonal matrix $\D=\diag(1,\,\a,\,\a^2,\cdots,\,\a^p)$. We have 
$\a\,J=\D^{-1}J\D$. By Al-Mohy \cite[Lem.~1.2~(2)]{almo24}, we have
$
\cD_{\exp}(A,\a\,J,V) = \cD_{\exp}(A,J,V\D^{-1})\D.
$
Applying Theorem \ref{thm1} for $A$, $J$, and $V\D^{-1}=\begin{bmatrix}
                       v_0 & \a^{-1}v_p & \a^{-2}v_{p-1} & \cdots & \a^{-p}v_1 
                     \end{bmatrix}$ yields
\[
\cD_{\exp}(A,J,V\D^{-1}) = \begin{bmatrix} 
\vphi_1(A)v_0 & \a^{-1}\vphi_1(A)v_p  & \cdots & \sum_{k=1}^{p}\a^{k-p-1}\vphi_k(A)v_k
 \end{bmatrix}.
\]            
The result follows by multiplying through from the right by $\D$ and extracting the first and the last columns. 
\end{proof}

\section{Error analysis and parameter selection}\label{sec.param}
Theorem \ref{thm1} forms the basis for our algorithm. We use a truncated Taylor series, $T_m(z)=\sum_{k=0}^{m}z^k/k!$, to approximate the exponential function $\e^z$. As Taylor polynomial yields good approximation near the origin, we need to take the scaling parameter $s$ large enough to accelerate the convergence. The matrix $S$ in \eqref{rec.D} is computed once then passed to the recurrence \eqref{rec.F} to recover the effect of the scaling. We then extract the first and last columns of the matrix $F_s$ to evaluate the linear combination  \eqref{phi.comb}. If $s$ is large, the computation is dominate by the $s-1$ actions of the matrix exponential $\e^{A/s}$ on $F_k$ in \eqref{rec.F}. Therefore, it is important to keep $s$ as small as possible. Al-Mohy and Higham \cite{alhi11} propose shifting the diagonal entries of the matrix $A$ by $\trace(A)/n$, scaling the shifted matrix, premature termination of Taylor polynomial if a relative forward error achieves a given tolerance. Since the matrix $A$ is accessible via matrix-vector product only, the evaluation of $\trace(A)$ is not straightforward. 
Therefore we develop a new approach to shift $A$. We have
\begin{equation}\label{fre.bound}
  \left\|\e^{A/s}v-T_{m-1}(A/s)v\right\|\le\frac{\norm{A^{m}v}}{s^{m}m!}
  +\left\|\sum_{k=m+1}^{\infty}\frac{A^kv}{s^kk!}\right\|,
\end{equation} 
where $v$ is a unit vector. Now, consider the scalar function 
\begin{equation}\label{objve}
  \nu(\xi)=\norm{(A-\xi I)^{m}u}=\left\|\sum_{k=0}^{m}\binom{m}{k}\xi^{m-k}w_k\right\|,
\end{equation}  
where $w_{k+1}=Aw_k$ with $w_0=v$. Suppose $\xi^*$ is a minimizer of $\nu(\xi)$.
Thus, for a given tolerance, $\tol$, we choose $s$ such that $s^{-m}\nu(\xi^*)/m!\le\tol$. We use the spectral norm for its smoothness.
\begin{algorithm}
\caption{Selection of scaling and shift parameters ($s$, $\xi^\ast$)}
\label{alg:param}
\begin{algorithmic}[1]
\State \textbf{Input:} Matrix $A\in\C^{n\times n}$ or linear operator $A:\C^n\!\to\!\C^n$, degree $m\!\ge\!1$ (default $m=61$), and tolerance $\tol$ (default $\tol = 2^{-53}$).
\State \textbf{Output:} Scaling parameter $s>0$ and optimal shift $\xi^*\in\C$.
\State $g_{\max} = \delta\log(\mathrm{realmax})$, 
       $g_{\min} =  \delta\log(\mathrm{realmin})$.
       \Comment{(Over/under)flow guards (default $\delta = 0.8$)}
\State Take $v\in\R^n$ with $\|v\|_2=1$, set $V_0:=v$.
\State $\ell_j=\log\binom{m}{j}$ and $\ell_{\max}=\max_j \ell_j$,\quad $j=0,\dots,m$ 

\For{$k=1,\dots,m$} \Comment{Build safe powers}
  \State $W_k=A V_{k-1}$
  \State $L_k=\log\|W_k\|_2$
  \If{$L_k+\tfrac12\log(k+1)+\ell_{\max} > g_{\max}$ \textbf{ or } $L_k < g_{\min}$}
     \State \textbf{break}
  \Else \State $V_k = W_k$
  \EndIf
\EndFor
\State Let $r$ be the last accepted index ($V_0,\dots,V_r$ available).

\State $j_r=\max\{2,\,r-5\}$.
\State $s_0 \;=\; \exp\!\Big(\frac{1}{r-j_r}\sum_{j=j_r}^{r-1}(L_{j+1}-L_j)\Big)$
\Comment{Geometric mean via safe ratios}

\State $V_j \gets s_0^{-j}\,V_j$,\quad $j=1,\dots,r$
\Comment{Normalize and extend to degree $m$}
\For{$k=r+1,\dots,m$}
\State $V_k \gets s_0^{-1}\,A V_{k-1}$
\EndFor
\State Form $V =[V_0~V_1~\cdots~V_m]\in\C^{n\times(m+1)}$.
\State For $\xi\in\R$, let $z(\xi):=-\xi/s_0$,\quad $p(\xi)=[z^m,\dots,z,1]^T$, and $b=[\binom{m}{0},\dots,\binom{m}{m}]^T$.
\State $f(\xi)\;=\;\big\|\,V\,(b\odot p(\xi))\,\big\|_2^{1/m}$, where $\odot$ is Hadamard product.
\Comment{Objective function}
\State      $\mathcal{I}=\big[-\sqrt{n}\,s_0,\; \sqrt{n}\,s_0\big]$
      \Comment{Optimization interval}

\State $\xi^*\;=\;\arg\min_{\xi\in\mathcal{I}} f(\xi)$ 
\Comment{Brent's method}
\State $s \;=\; s_0 \;\dfrac{f(\xi^\ast)}{\big(\mathrm{tol}\cdot m!\big)^{1/m}}$
\end{algorithmic}
\end{algorithm}

Algorithm \ref{alg:param} computes the scaling parameter $s$ and the optimal shift $\xi^\ast$.
Starting from a unit vector, we build a short “power basis” $v,Av,A^2v,\ldots$ only as long as it is numerically safe, using a conservative guard that accounts both for overflow/underflow and for the later \emph{sum} of up to $k{+}1$ weighted columns in the objective (hence an extra logarithmic cushion that grows with $k$). When the guard would be violated, we stop; from the last few accepted growth factors $\|A^jv\|/\|A^{j-1}v\|$—deliberately excluding the newest column—we form a preliminary scale $s_0$ via their geometric mean. We then normalize each built column by $s_0^j$ and extend the basis to degree $m$ by repeatedly applying $A$ and dividing by $s_0$, which keeps all columns in a comfortable magnitude range. With this normalized basis, we define a one–dimensional objective that measures the size of a binomially weighted polynomial combination of the columns; the weights depend on a shift parameter only through the reduced variable $z=-\xi/s_0$. A scalar line search over a fixed bracket proportional to $\sqrt{n}\,s_0$ yields the minimizing shift $\xi^\ast$ and the corresponding objective value.
Finally, the scaling used downstream is set to
\[
s \;=\; \frac{s_0\,f(\xi^\ast)}{\big(\mathrm{tol}\,m!\big)^{1/m}},
\]
which enforces the target tolerance while inheriting stability from the guarded basis construction and the $s_0$ normalization.

Therefore, we will work on the matrix $A-\xi^* I$ with scaling parameter $s$. But how to recover $F_s=\cD_{\exp}(A,J,V)$? We have $\e^z=\e^{\xi^*}\e^{z-\xi^*}$. Applying the operator $\cD$ at $(A,J)$ in the direction $V$ yields
\begin{equation*}
\cD_{\exp}(A,J,V)= \e^{\xi^*}\cD_{\exp}(A-\xi^* I,J-\xi^* I,V).
\end{equation*} 
However, forming $\e^{\xi^\ast}$ directly may overflow or underflow, thereby destabilizing the right hand side computation.
To avoid this problem, we apply the operator $\cD$ on the equation $\e^z=\left(\e^{s^{-1}\xi^*}\e^{s^{-1}(z-\xi^*)}\right)^s$ and obtain
\begin{eqnarray}
 \cD_{\exp}(A,J,V) &=& \cD_{z^s}\left(\e^{s^{-1}\xi^*}\e^{s^{-1}(A-\xi^*I)},\e^{s^{-1}J},S\right)\nonumber \\
   &=& \e^{s^{-1}\xi^*}\left(\e^{s^{-1}(A-\xi^*I)}\cD_{z^{s-1}}\right)+ S\left(\e^{s^{-1}J}\right)^{s-1},\label{cD.shift}
\end{eqnarray}
where $S =\cD_{\exp}((A-\xi^*I)/s,(J-\xi^*I)/s,\e^{s^{-1}\xi^*}V/s)$.
The recurrence relation \eqref{rec.F} becomes
\begin{equation}\label{rec.F.shift}
F_{k+1} = \e^{s^{-1}\xi^*}\left(\e^{s^{-1}(A-\xi^*I)}F_k\right)+ S\left(\e^{s^{-1}J}\right)^{k-1},\quad F_1 = S,\quad k=1\colon s-1.
\end{equation} 

Thus, after applying $\e^{s^{-1}(A-\xi^\ast I)}$ to $F_k$, the recovery of $\e^{A/s}F_k$ follows immediately by multiplying by the scalar $\e^{\xi^\ast/s}$. This device was noted and used by Al-Mohy and Higham~\cite[sect.~3.1]{alhi11}.
The shift and scaling parameters are determined for $A$; however, evaluating the combination~\eqref{phi.comb} for multiple values of $t$ is essential. To this end, in~\eqref{cD.shift} we replace $A$ and $\xi^\ast$ with $tA$ and $t\xi^\ast$, respectively; as a consequence, the scaling parameter is updated to $\lceil t\,s\rceil$.

Given a Taylor degree $m$, the objective~\eqref{objve}—based on the leading term of the forward error bound~\eqref{fre.bound}—is used to select the scaling $s$ and the shift $\xi^\ast$ for a desired accuracy. This choice does not, by itself, guarantee that the truncation error meets the target tolerance; however, the bound can always be enforced by increasing $m$ while keeping $s$ fixed. Consequently, in \eqref{rec.F.shift} (with $A$ denoting the scaled and shifted operator), we allow the Taylor series used to approximate $\e^{A}F_k$ to terminate adaptively once the partial sums satisfy the following relative error test:

\begin{equation*}
\norm{T_{2j}(A)F_k - T_{2j-2}(A)F_k}\le\tol\,\norm{T_{2j}(A)F_k},\quad j\ge1.
\end{equation*}
Observe that $T_{2j}(A) - T_{2j-2}(A) = A^{2j-1}/(2j-1)!+A^{2j}/(2j)!$. Thus, it is equivalent to have
\begin{equation}\label{stopTm}
 \frac{\norm{A^{2j-1}F_k}}{(2j-1)!} + \frac{\norm{A^{2j}F_k}}{(2j)!}\le\tol\,\norm{T_{2j}(A)F_k},
\end{equation} 
which, in fact, the termination criterion proposed by Al-Mohy and Higham~\cite[Eq.~(3.15)]{alhi11}. We use the same termination criterion for the series in \eqref{rec.D}.
\section{Algorithm}\label{sec.alg}
In this section we describe the scaling and recovering algorithm for computing the linear combinations\eqref{phi.comb}. The algorithm returns a single or multiple combinations depending on the input data. Algorithm~\ref{alg.phi1} evaluates
$w=\sum_{j=0}^{p}\alpha^{j}\,\varphi_{j}(tA)v_{j}$
to a user-specified tolerance \(\tol\) using the scaling and recovering method with a truncated Taylor series. 
Given the scaling parameter $s$ and shift $\xi^\ast$ produced by Algorithm~\ref{alg:param}, it rescales the problem to $(t/s)(A-\xi^\ast I)$, records the safe undo factor \(\mu=\e^{t\xi^\ast/s}\), evaluates the matrix $S$ via the recurrence \eqref{rec.D} with stopping test \eqref{stopTm}, extracts the first and last columns of $S$, recovers $\e^{tA/s}v_0$, and then advances the scaled solution using the recurrence \eqref{rec.F.shift}, which terminates adaptively when \eqref{stopTm} is satisfied. 
\begin{algorithm}
\caption{Computation of $\sum_{j=0}^p \a^j\varphi_j(tA)v_j$}
\label{alg.phi1}
\begin{algorithmic}[1]
\algrenewcommand\algorithmicrequire{\textbf{Input:}}
\algrenewcommand\algorithmicensure{\textbf{Output:}}
\Require $t,\,\a\in\C$; matrix $A\in\C^{n\times n}$ or linear operator $A:\C^n\!\to\!\C^n$;  
$V=[v_0,v_1,\ldots,v_p]\in\C^{n\times(p+1)}$;
$J=[0]\oplus J_p(0)\in\C^{(p+1)\times(p+1)}$; 
$J=[0]\oplus J_p(0)\in\C^{(p+1)\times(p+1)}$;
tolerance $\tol$;
scaling parameter $s\in\R^{+}$ and shift $\xi\in\C$ via Algorithm \ref{alg:param}; 
\Ensure $w=\sum_{j=0}^p \a^j\varphi_j(tA)\,v_j$
\State $s \gets \lceil |\,t|s \rceil$, \quad $\mu = \e^{t\xi/s}$
\State $A_1= A - \xi I$,\quad $J\gets \a\,J$
\State $V\gets [v_0,\,v_p,\,v_{p-1},\cdots,\,v_1]$
\State $V \gets (\mu/s)V$
\State $\widetilde{J}= \e^{J/s}$, \quad $J \gets (J - t\xi I)/s$
\State $S = V$, \quad $D = V$, \quad $k = 1$, \quad $\sigma = 1$
\State $c_1 =\infty$, \quad $c_2 = \|D\|_\infty$
\While{$c_1 + c_2 > \tol\, \|S\|_\infty$}
  \State $k \gets k+1$, \quad $c_1 \gets c_2$, \quad $\sigma  \gets k\,\sigma $.
  \State $V \gets VJ$, \quad $D \gets tA_1\,D/s + V$,\quad $c_2 \gets \|D\|_\infty / \sigma $
  \State $S \gets S + D/\sigma $
  \Comment{$S \approx\cD_{\exp}(tA_1/s,J,V)$}
\EndWhile
\State $F = S(:,[1,p\!+\!1])$ \Comment{first \& last columns of $S$}
\State $F(:,1) \gets t\,A\,F(:,1) + v_0$ \Comment{recover $\e^{tA/s}v_0$}
\For{$j=1:s-1$}
    \State $E = F$;\quad $k\gets 0$;\quad $c_1 \gets \infty$;\quad $c_2 \gets \lVert F\rVert_\infty$
    \While{$c_1 + c_2 > \tol\,\lVert E\rVert_\infty$}
        \State $k \gets k+1$;\quad $c_1 \gets c_2$
        \State $F \gets tA_1\,F/(sk)$,\quad $c_2 \gets \lVert F\rVert_\infty$ 
        \State $E \gets E + F$
    \EndWhile
    \State $E \gets \mu\,E$ \Comment{undo spectral shift}
    \State $S \gets S\,\widetilde{J}$ 
    \Comment{advance block series side by $\e^{(t/s)J}$}
    \State $F \gets E + S(:,[1,p\!+\!1])$ \Comment{refresh the two tracked columns}
    \State $F(:,1) \gets E(:,1)$ 
\EndFor
\State \textbf{return} $F(:,1) = \e^{tA}v_0$,\quad 
$F(:,2)=\sum_{j=1}^p \a^{j-1}\varphi_j(tA)v_j$,\quad $w = F(:,1) + \a F(:,2)$
\end{algorithmic}
\end{algorithm}

Our MATLAB implementation provides flexible output options. 
If only $v_0$ is supplied (i.e., $V=v_0$), it returns $\e^{tA}v_0$. 
If $V=[v_1,\ldots,v_p]$ is supplied and $v_0$ is omitted, it returns $\sum_{j=1}^p \varphi_j(tA)v_j$. 
We omit these interface details from Algorithm~\ref{alg.phi1} to keep the presentation simple.

To motivate the block version of the algorithm, observe that the stages of exponential integrator schemes (e.g., \eqref{expRK4s6}) often require evaluating sequences of the form \eqref{phi.comb}
and evaluating $\vphi_k(t_i)v_k$ for different $t_i$'s and a fixed $k$. Below, we show how to achieve these goals.

Inspired by the work of Higham and Kandolf~\cite{hika17} on computing the action of trigonometric and hyperbolic matrix functions, where they extend the algorithm of Al-Mohy and Higham~\cite[Alg.~3.2]{alhi11} to a block version by replacing a scalar parameter with a diagonal matrix, we present the following proposition, which establishes a computational framework for the block algorithm.
\begin{proposition}
Let $g_i(x)=\sum_{j=0}^\infty a_{ij}x^j$ be convergent power series (for $i=0,\dots,p$), 
$A\in\C^{n\times n}$, $V=[v_0,\ldots,v_p]\in\C^{n\times(p+1)}$, and
$T=\diag(t_1,\ldots,t_r)\in\C^{r\times r}$. Choose nonzero
$\alpha_1,\ldots,\alpha_r\in\C$.
Define the $(p+1)\times r$ matrix $\Gamma=[\,\alpha_k^{\,i}\,]_{i=0:p,\ k=1:r}$ and, for each $j\ge0$,
$
D_j:=\diag(a_{0j},a_{1j},\ldots,a_{pj}).
$
Assume that for every $i$ and $k$, the spectrum $\sigma(t_kA)$ lies in the region of convergence of $g_i$.
Then
\[
G \;:=\; \sum_{j=0}^\infty A^{\,j}\,V\,D_j\,\Gamma\,T^{\,j}
\;=\;
\begin{bmatrix}
\displaystyle\sum_{i=0}^{p}\alpha_1^{\,i}\, g_i(t_1A)v_i &
\displaystyle\sum_{i=0}^{p}\alpha_2^{\,i}\, g_i(t_2A)v_i &
\cdots &
\displaystyle\sum_{i=0}^{p}\alpha_r^{\,i}\, g_i(t_rA)v_i
\end{bmatrix}\!.
\]
\end{proposition}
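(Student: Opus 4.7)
\medskip

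The plan is to prove the identity columnwise: I will show that for each $k \in \{1,\dots,r\}$, the $k$-th column of $G$ equals $\sum_{i=0}^{p}\alpha_k^{\,i}\,g_i(t_k A)v_i$. Since both sides are linear in the column-selection operation, it suffices to compute $Ge_k$, where $e_k\in\C^r$ is the $k$-th standard basis vector.

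First I would exploit the structure of the factors on the right of $A^j V$ to collapse things for a fixed $k$. Because $T$ is diagonal, $T^{\,j}e_k = t_k^{\,j}e_k$, so
\[
Ge_k \;=\; \sum_{j=0}^{\infty} t_k^{\,j}\, A^{\,j}\, V\, D_j\, \Gamma e_k .
\]
The column $\Gamma e_k$ is by definition the vector $[\alpha_k^{0},\alpha_k^{1},\dots,\alpha_k^{p}]^T \in \C^{p+1}$, and $D_j$ is diagonal, so
\[
D_j\,\Gamma e_k \;=\; \bigl[a_{0j}\alpha_k^{0},\, a_{1j}\alpha_k^{1},\dots,\,a_{pj}\alpha_k^{p}\bigr]^T,
\]
and multiplying by $V=[v_0,\dots,v_p]$ gives $V D_j \Gamma e_k = \sum_{i=0}^{p} a_{ij}\,\alpha_k^{\,i}\, v_i$. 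Substituting back yields
\[
Ge_k \;=\; \sum_{j=0}^{\infty}\sum_{i=0}^{p} a_{ij}\,\alpha_k^{\,i}\, t_k^{\,j}\,A^{\,j} v_i .
\]

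The next step is to interchange the two sums. The inner sum over $i$ is finite (only $p+1$ terms), so the swap is unconditional and needs no convergence argument beyond well-definedness of the outer series. After swapping,
\[
Ge_k \;=\; \sum_{i=0}^{p}\alpha_k^{\,i}\sum_{j=0}^{\infty} a_{ij}\, (t_k A)^{\,j}\, v_i .
\]
By hypothesis, $\sigma(t_kA)$ lies in the disk of convergence of $g_i(x)=\sum_j a_{ij}x^j$, so the matrix power series $g_i(t_kA)=\sum_j a_{ij}(t_kA)^j$ converges and represents $g_i(t_kA)$ in the usual sense of matrix functions. Applying this to the vector $v_i$ gives $\sum_j a_{ij}(t_kA)^j v_i = g_i(t_kA)v_i$, and hence
\[
Ge_k \;=\; \sum_{i=0}^{p}\alpha_k^{\,i}\, g_i(t_kA)\,v_i,
\]
which is the claimed $k$-th column. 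Ranging $k$ over $1,\dots,r$ completes the proof.

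The only nontrivial point is the convergence/interchange step, but here it is essentially free: the $i$-sum is finite, and the $j$-series is recognized columnwise as a convergent matrix power series applied to $v_i$ under the stated spectral condition. The rest is bookkeeping: identifying $T^j e_k$, $\Gamma e_k$, and the action of the diagonal $D_j$, all of which follow directly from the definitions.
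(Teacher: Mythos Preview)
Your proof is correct and follows essentially the same columnwise approach as the paper: extract the $k$th column via $T^{\,j}e_k=t_k^{\,j}e_k$ and $\Gamma e_k=(\alpha_k^{\,i})_{i}$, expand $V D_j\Gamma e_k$, swap the finite $i$-sum with the $j$-series, and identify $\sum_j a_{ij}(t_kA)^j=g_i(t_kA)$. Your remark that the interchange is unconditional because the $i$-sum is finite is a slightly sharper justification than the paper's appeal to the convergence assumption, but otherwise the arguments coincide.
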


\begin{proof}
The $k$th column of $G$ is
\[
\sum_{j=0}^\infty A^{\,j}\,V\,(D_j\gamma_k)\,t_k^{\,j}
=\sum_{j=0}^\infty A^{\,j}\!\left(\sum_{i=0}^p \alpha_k^{\,i} a_{ij} v_i\right)t_k^{\,j}
=\sum_{i=0}^p \alpha_k^{\,i}\!\left(\sum_{j=0}^\infty a_{ij}(t_kA)^j\right)\!v_i
=\sum_{i=0}^p \alpha_k^{\,i} g_i(t_kA)v_i,
\]
where $\gamma_k$ is the $k$th column of $\Gamma$. The interchange of sums is justified by the convergence assumption on each $g_i$ at $t_kA$.
\end{proof}

\noindent\emph{Remark.} $\Gamma$ is a Vandermonde matrix built on the nodes $\a_1$, $\a_2$, \dots, $\a_r$. If $\a_1=\cdots=\a_r=\a$, then $\Gamma=[1,\a,\ldots,\a^p]^T\mathbf{1}_r^T$ and the formula reduces to the single-$\alpha$ case.

A significant application of $G$ arises when we set $g_i=\varphi_i$, which are encoded by the vector \(a^{(j)} = \bigl[ (j+i)!^{-1} \bigr]^T\), $i=0\colon p$, yielding
$ G =
\begin{bmatrix}
w_1 & w_2 & \cdots & w_r
\end{bmatrix}$
for \eqref{phi.comb}. If we want to evaluate $\vphi_k(t_i)v_k$ for different values of $t_i$ and a fixed $k$, we simply set all vectors to zero accept $v_k$ to obtain 
$ G =
\begin{bmatrix}
\vphi_k(t_1)v_k & \vphi_k(t_2)v_k & \cdots & \vphi_k(t_r)v_k
\end{bmatrix}$.
Although this approach is elegant and allows multiple linear combinations to be computed at once, the series can converge slowly when the spectrum of $A$ is broadly distributed in the complex plane. This motivates a block extension of Algorithm~\ref{alg.phi1}: we replace the scalars $t$ and $\alpha$ with diagonal matrices $T$ and $\Delta$, respectively, and modify the updates in Kronecker form to preserve the block structure. The resulting block version is Algorithm~\ref{alg:phifun}.
\begin{algorithm}
\caption{Computation of multiple linear combinations 
$w_i=\sum_{j=0}^p \a_i^k\,\varphi_j(t_iA)\,v_j$,\quad $i=1\colon r$}
\label{alg:phifun}
\begin{algorithmic}[1]
\algrenewcommand\algorithmicrequire{\textbf{Input:}}
\algrenewcommand\algorithmicensure{\textbf{Output:}}
\Require $t=(t_1,\,t_2,\cdots,\,t_r),\,\a=(\a_1,\,\a_2,\cdots,\,\a_r)\in\C^{r\times1}$; matrix $A\in\C^{n\times n}$ or linear operator $A:\C^n\!\to\!\C^n$; 
$V=[v_0,v_1,\ldots,v_p]\in\C^{n\times(p+1)}$; tolerance $\tol$;
scaling parameter $s\in\R^{+}$ and shift $\xi\in\C$ via Algorithm \ref{alg:param}.
\Ensure $w_i=\sum_{j=0}^p \a_i^k\,\varphi_j(t_iA)\,v_j$,\quad $i=1\colon r$
\State $A_1 = A - \xi I$.
\State $V\gets [v_0,\,v_p,\,v_{p-1},\cdots,\,v_1]$
\State $s \gets \lceil s\max_i|t_i| \rceil$
\State $T=\diag(t_1,\,t_2,\cdots,\,t_r)$, \quad $\mu = \e^{(\xi/s)T}$
\State $\D =\diag(\a_1,\,\a_2,\cdots,\,\a_r)$
\State $V \gets V(I_{p+1}\otimes\mathbf{1}_r^T\mu/s)$
\State $\widetilde{J}= \exp(J\otimes \D/s)$, \quad $J \gets J\otimes \D-\xi I_{p+1}\otimes T$
\State $S = V$, \quad $D = V$, \quad $k = 1$, \quad $\sigma = 1$
\State $c_1 =\infty$, \quad $c_2 = \|D\|_\infty$
\While{$c_1 + c_2 > \tol\, \|S\|_\infty$}
  \State $k \gets k+1$, \quad $c_1 \gets c_2$, \quad $\sigma  \gets k\,\sigma $.
  \State $V \gets VJ$, \quad $D \gets A_1\,D(I_{p+1}\otimes T/s) + V$,\quad $c_2 \gets \|D\|_\infty / \sigma $
  \State $S \gets S + D/\sigma $
\EndWhile
\State $F = S(:,[1:r,\,pr\!+\!1:(p+1)r])$ 
   \Comment{first \& last $n\times r$ block columns of $S$}
\State $F(:,1:r) \gets \,A\,F(:,1:r)T + v_0\mathbf{1}_r^T$ 
       \Comment{recover $\e^{t_iA/s}v_0$}
\State $T\gets I_2\otimes T/s$,\quad $\mu\gets I_2\otimes\mu$    
\For{$j=1:s-1$}
    \State $E = F$;\quad $k\gets 0$;\quad $c_1 \gets \infty$;\quad $c_2 \gets \lVert F\rVert_\infty$
    \While{$c_1 + c_2 > \tol\,\lVert E\rVert_\infty$}
        \State $k \gets k+1$;\quad $c_1 \gets c_2$
        \State $F \gets A_1\,FT/k$,\quad $c_2 \gets \lVert F\rVert_\infty$ 
        \State $E \gets E + F$
    \EndWhile
    \State $E \gets E\mu$ 
    \State $S \gets S\,\widetilde{J}$ 
    \State $F \gets E + S(:,[1:r,\,pr\!+\!1:(p+1)r])$ \Comment{refresh the two tracked blocks}
    \State $F(:,1:r) \gets E(:,1:r)$ 
\EndFor
\State \textbf{return}   $[w_1,\,w_2,\cdots,\,w_r] = F(:,1:r) + F(:,r+1:2r)\D$
\end{algorithmic}
\end{algorithm}

In the classical formulation of exponential Runge--Kutta methods, the stage abscissae $t_i$ appear simultaneously as scaling parameters in the arguments of the $\varphi$-functions and as polynomial weights outside, yielding combinations of the form
$
\sum_{k=0}^p t_i^k\,\varphi_k(t_i h A)v_k,
$
see, e.g., \cite{niwr12,miwr05,hoos10,grt18}.
In our setting, these roles are separated: the scaling parameter $t_i$ enters only inside the $\varphi$-functions, while the coefficients $\alpha_i$ act purely as external weights. 
This decoupling preserves the same algebraic structure as the standard stage representations but provides additional flexibility, since some integrator schemes may require different values for the abscissae $t_i$ and the polynomial weights $\alpha_i$. In particular, this makes it possible to accommodate stage times that do not coincide with the abscissae, or to simplify certain stages by setting $\alpha_i=1$.

\section{Numerical experiment}\label{sec.num}
In this section we present several numerical experiments to demonstrate the robustness of our algorithm. All computations were carried out in MATLAB R2022b on a PC equipped with an Intel\textsuperscript{\textregistered} Core\texttrademark\ i7-7700T CPU @ 2.90GHz and 16GB of RAM. The MATLAB code \phimv\ (\url{https://github.com/aalmohy/phimv}) implements our algorithm. 
We test it against the following MATLAB implementations:
\begin{enumerate}
  \item \phifunm: the algorithm of Al-Mohy and Liu \cite[Alg.~5.1]{alli25}, based on a scaling and recovering strategy that simultaneously computes several $\varphi$-functions for matrices of moderate dimension.
  \item \phipm: the Krylov method of Niesen and Wright \cite{niwr12} for evaluating a single linear combination of the form~\eqref{phi.comb} with $\alpha_i=t_i$.
  \item \bamphi: the method of Caliari, Cassini, and \v{Z}ivkovi\'c \cite{ccz23}, using Newton-form polynomial interpolation at special nodes combined with Krylov techniques (\url{https://github.com/francozivcovich/bamphi}).
  \item \kiops: the adaptive Krylov method of Gaudreault, Rainwater, and Tokman \cite{grt18}, which employs the incomplete orthogonalization procedure (\url{https://gitlab.com/stephane.gaudreault/kiops}).
\end{enumerate}
Both \bamphi\ and \kiops\ accept multiple right-hand sides and evaluate the combination in~\eqref{phi.comb} with $\alpha_i=t_i$ in a single call.

\begin{figure}
  \centering
  \includegraphics[width = 11cm]{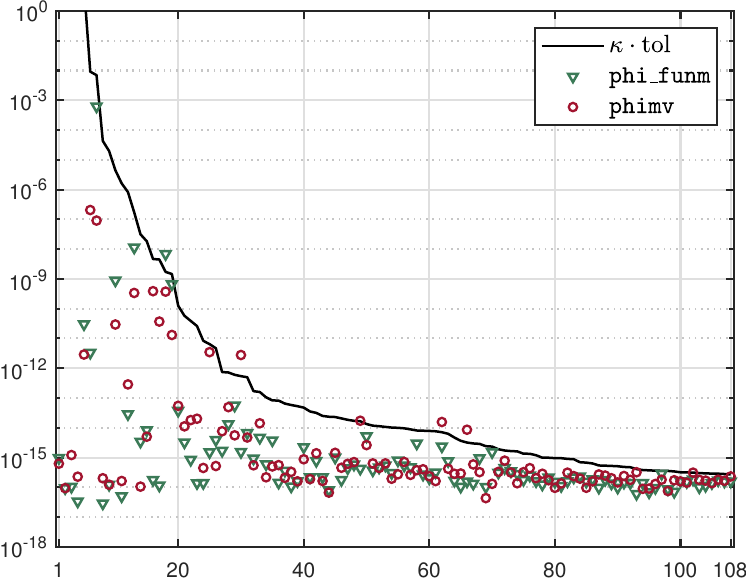}
  \caption{Relative forward errors for the computation $w=\sum_{j=0}^p\varphi_j(A)v_j$ using \phimv\ 
  and \phifunm. The solid
  line represents the condition number, $\kappa$, multiplied by $\tol=2^{-53}$.}
  \label{fig.test1}
\end{figure}
%
\begin{figure}
  \centering
  \includegraphics[width = 11cm]{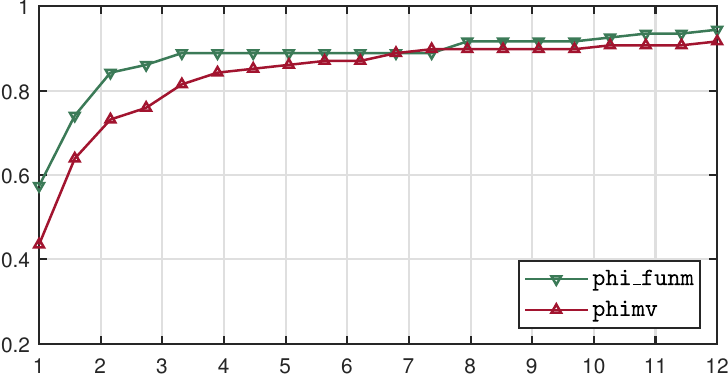}
  \caption{The data in Figure \ref{fig.test1} presented as a performance profile. }
  \label{fig.test2}
\end{figure}
\paragraph{First experiment.}
We compare \phimv\ against \phifunm\ on the suite of 108 test matrices
described and used by Al-Mohy and Liu \cite[Sec.~6]{alli25} in their stability study of \phifunm. This benchmark assesses the robustness of \phimv\ and the effectiveness of its scaling-parameter selection and termination criterion. In preliminary runs we also included \bamphi\ and \kiops, but observed frequent breakdowns and prohibitive runtimes; we therefore restrict this experiment to \phimv\ and \phifunm.

For each test matrix we computed the linear combination
$w=\sum_{j=0}^p \varphi_j(A)v_j$
using \phimv\ with inputs $t=1$, $\alpha=1$, tolerance $\tol=2^{-53}$ (double precision), and randomly generated
$V=[v_0,v_1,\ldots,v_p]$ with $p=5$.
With \phifunm, we first evaluate the matrices $\varphi_j(A)$ in a single call and then form $w$.
For each method, let $\widehat{w}$ denote its computed result in double precision.
The reference solution $w_{\exact}$ is obtained from \cite[Eq.~(2.10)]{alhi11},
\[
w=\begin{bmatrix} I_n & 0 \end{bmatrix}
\exp\!\left(
\begin{bmatrix}
A & [\,v_p,\,v_{p-1},\cdots,\,v_1\,]\\[2pt]
0 & J_p(0)
\end{bmatrix}
\right)
\begin{bmatrix} v_0\\ e_p \end{bmatrix},
\]
computed by applying the algorithm of Fasi and Higham \cite{fahi19} to the augmented matrix with 200-digit precision.

Figure~\ref{fig.test1} reports the relative forward errors $\normt{w_{\exact}-\widehat{w}}/\normt{w_{\exact}}$ of the two methods. The problems are ordered by decreasing condition number $\kappa$ (estimated as in Al-Mohy \cite[Sec.~5]{almo24}), and the black solid line shows the reference bound $\kappa\,\tol$. The errors mostly lie below this line, indicating satisfactory numerical stability for both implementations. To discriminate more finely, Figure~\ref{fig.test2} shows a performance profile \cite{domo02}: for each problem, we identify the smallest error attained by any \emph{solver} (here, \phimv\ or \phifunm); for a factor $f\!\ge\!1$, a solver counts as successful on that problem if its error is within $f$ of the best. The profile plots, versus $f$, the fraction of problems on which the solver meets this criterion. The value at $f=1$ is the win rate; as $f$ increases, the curve reflects overall robustness. Higher curves indicate better accuracy across the suite.

\paragraph{Second experiment.} We consider the spectral collocation discretization of the Laplacian with homogeneous Dirichlet boundary conditions on the interval $[0,L]$. The resulting operator $A \in \mathbb{R}^{(N-1)\times(N-1)}$ is obtained by forming the Chebyshev differentiation matrix $D$, rescaling it to the interval $[0,L]$, squaring to approximate the second derivative, and removing the first and last rows and columns to enforce the boundary conditions. The matrix $A$ is dense and highly nonnormal, with eigenvalues lying on the negative real axis and a spectral radius that grows rapidly with $N$, which makes it a stiff and representative test problem for exponential integrators. For $N=100$ and $L=2$, the following MATLAB script \cite{tref00} constructs $A$:
\begin{lstlisting}[language=MATLAB,caption={},label={code:testmatrix}]
j = (0:N)'; x = (cos(pi*j/N)+1)*(L/2); 
c = [2;ones(N-1,1);2].*(-1).^j;
X = repmat(x,1,N+1); dX = X - X';
D = (c*(1./c)')./(dX+eye(N+1)); D = D - diag(sum(D,2));
A = (2/L)^2 * (D^2); A = A(2:N,2:N);
\end{lstlisting}

\begin{table}
\centering
\setlength{\tabcolsep}{-2.0pt} 
\caption{Relative errors and run times (s) over varying $t$.}\label{tab:err.time1}
\begin{tabular*}{0.80\textwidth}{lSSSS}
\toprule
$~~t$ & $\phimv$ & $\bamphi$ & $\kiops$ & $\phipm$ \\
\midrule
\addlinespace[4pt]
\multirow{2}{*}{$10^{-4}$} & 3.5e-14 & 3.4e-13 & 1.3e-15 & 1.7e-10 \\
  & 0.013 & 0.047 & 0.014 & 0.005 \\
\addlinespace[4pt]
\multirow{2}{*}{$10^{-3}$} & 5.0e-14 & 1.2e-13 & 3.0e-14 & 3.4e-11 \\
  & 0.049 & 0.206 & 0.053 & 0.012 \\
\addlinespace[4pt]
\multirow{2}{*}{$10^{-2}$} & 5.7e-13 & 7.0e-13 & 1.5e-13 & 7.4e-06 \\
  & 0.436 & 1.546 & 0.375 & 0.012 \\
\addlinespace[4pt]
\multirow{2}{*}{$10^{-1}$} & 1.2e-12 & 1.0e-11 & 1.0e-05 & 4.2e+19 \\
  & 4.086 & 15.026 & 3.133 & 0.045 \\
\addlinespace[4pt]
\multirow{2}{*}{1} & 1.5e-10 & 4.2e-11 & 5.8e+08 & 4.2e+64 \\
  & 40.7 & 152.2 & 37.3 & 0.110 \\
\bottomrule
\end{tabular*}
\end{table}
For $p=6$, and $t_i= 10^{i-5}$, $i=1\colon5$, we computed the linear combination $w_i=\sum_{j=0}^{p}t_i^j\varphi_j(t_iA)v_j$ for a random set of vectors $v_j$'s.
Since $A$ has a relatively small size, we used the algorithm of Fasi and Higham \cite{fahi19} for reference solutions as described in the previous experiment. Table~\ref{tab:err.time1} reports the relative forward errors in 1-norm alongside the execution time for each algorithm.
All routines achieve near–machine precision for $t\le 10^{-3}$ 
with \phipm\ the fastest but already the least accurate among the four. At $t=10^{-2}$, \kiops\ remains most accurate and slightly faster than \phimv\  while \phipm\ loses several digits. For larger steps the contrast is stark: at $t=10^{-1}$, \phimv\ delivers the best accuracy at moderate cost, \bamphi\ is accurate but slower, \kiops\ degrades, and \phipm\ fails catastrophically. At $t=1$, only \phimv\ and \bamphi\ remain reliable: \bamphi\ is slightly more accurate but much slower than \phimv\, whereas \kiops\ and \phipm\ are unusable. Overall, \phimv\ offers the best robustness–efficiency trade-off across step sizes: it matches or nearly matches the most accurate algorithm for $t\le 10^{-2}$ and is the only method (apart from \bamphi, which attains similar accuracy albeit at higher cost) that maintains trustworthy accuracy for $t\in\{10^{-1},1\}$.
 
\begin{table}[t]
\centering
\setlength{\tabcolsep}{1.5pt} 
\caption{$A=UW^T$ with \eqref{M1}. Relative errors and run times (s) over varying $t$.}
\label{tab:imgeig-compact}
\begin{tabular}{l S S S}
\toprule
$t$         & {$\phimv$} & {$\bamphi$} & {$\kiops$} \\
\midrule
$0.1$   & 1.65e-16   & 1.32e-16    & 1.86e-16 \\
            & 3.08       & 1.18        & 0.55  \\
\addlinespace[2pt]
$1$   & 5.52e-15   & 1.18e-15    & 3.04e-16 \\
            & 1.36       & 0.82        & 0.50  \\
\addlinespace[2pt]
$10$   & 7.99e-13   & 1.78e-14    & 1.19e-13 \\
            & 3.37       & 0.87        & 0.67  \\
\addlinespace[2pt]
$50$   & 8.52e-13   & 3.52e-13    & 4.66e-12 \\
            & 11.56       & 0.90        & 0.91  \\
\addlinespace[2pt]
$100$   & 5.10e-12   & 1.21e-12    & 1.87e-11 \\
            & 21.91       & 0.93        & 1.36  \\
\bottomrule
\end{tabular}
\end{table}
\begin{table}[t]
\centering
\setlength{\tabcolsep}{1.0pt} 
\caption{$A=UW^T$ with \eqref{M2}. Relative errors and run times (s) over varying $t$. \textsc{NA} indicates no answer; \textsc{TO} indicates a timeout (10 min).}
\label{tab:nonnorm-compact}
\begin{tabular}{l S S S}
\toprule
$t$         & {$\phimv$} & {$\bamphi$} & {$\kiops$} \\
\midrule
$0.1$   & 9.38e-12   & 4.72e-12    & 5.60e-11 \\
            & 1.92       & 2.32        & 0.94  \\
\addlinespace[2pt]
$1$   & 1.46e-09   & 3.27e-11    & 1.10e-08 \\
            & 2.79       & 5.33        & 1.21  \\
\addlinespace[2pt]
$10$   & 3.78e-10   & 1.02e-11    & 2.43e-07 \\
            & 5.43       & 21.64        & 8.12  \\
\addlinespace[2pt]
$50$   & 1.35e-09   & 8.45e-06    & \textsc{NA} \\
            & 18.36       & 84.44        & \textsc{TO}  \\
\addlinespace[2pt]
$100$   & 1.24e-09   & 4.17e-06    & \textsc{NA} \\
            & 30.17       & 177.83        & \textsc{TO}  \\
\bottomrule
\end{tabular}
\end{table}
\begin{table}[t]
\centering
\setlength{\tabcolsep}{6pt} 
\caption{$A=UW^T$ with \eqref{M3}. Relative errors and run times (s) over varying $t$. \textsc{BD}, \textsc{NA}, and \textsc{TO} indicate breakdown, no answer, and a timeout (10 min), \resp.}
\label{tab:moler-compact}
\begin{tabular}{l S S S}
\toprule
$t$         & {$\phimv$} & {$\bamphi$} & {$\kiops$} \\
\midrule
$10^{-5}$   & 2.39e-10   & \textsc{BD}    & 1.16e-10 \\
            & 1.84       & 190.83        & 1.22  \\
\addlinespace[2pt]
$10^{-3}$   & 1.91e-09   & \textsc{NA}    & 2.32e-09 \\
            & 2.16       & \textsc{TO}       & 1.38  \\
\addlinespace[2pt]
$10^{-1}$   & 2.11e-05   & \textsc{BD}    & 1.32e-05 \\
            & 2.55       & 3.65        & 1.40  \\
\addlinespace[2pt]
$1$   & 2.22e-05          & \textsc{BD}   & \textsc{NA} \\
            & 5.25        & 2.26        & \textsc{TO}  \\
\addlinespace[2pt]
$10$   & 4.60e-05         & \textsc{BD}    & \textsc{NA} \\
            & 33.22       & 4.05        & \textsc{TO}  \\
\bottomrule
\end{tabular}
\end{table}
\paragraph{Third experiment.}
We design a reproducible family of large, low-rank test operators using the deterministic factorization
$A=U W^{\!T}$ built from a fixed orthonormal basis and a user-chosen core
$M\in\R^{r\times r}$. The goals are (i) to stress algorithms across \emph{normal} and \emph{highly nonnormal} regimes while keeping matrix–vector products matrix-free and inexpensive, and (ii) to provide a trustworthy \emph{reference solution} for accuracy assessment at very large dimensions.

We employ an orthonormal discrete-cosine basis (DCT-II)~\cite{gilb99}.
Let $U\in\R^{n\times r}$ contain the first $r$ columns of the orthonormally scaled DCT-II matrix with entries
\[
U_{ik}
=\sqrt{\frac{2}{n}}\;\alpha_k\,
\cos\!\Big(\frac{\pi\,(i+\tfrac12)\,k}{n}\Big),
\qquad i=0,\dots,n-1,\;\; k=0,\dots,r-1,
\]
where $\alpha_0=1/\sqrt{2}$ and $\alpha_k=1$ for $k\ge 1$.
We then set $W:=U M^{\!T}$. The spectrum and normality of $A$ are controlled entirely by $M$.
All applications of $A$ are matrix-free:
$x\mapsto U\,(W^{\!T}x)$ with $O(nr)$ work and storage.

For $V=[\,v_0\;v_1\;\cdots\;v_p\,]$ and $j\ge1$, one checks that
$A^j = U M^{\,j-1} W^{\!T}$. Separating the $j=0$ term yields
\begin{equation}\label{lin.comb.low.rank}
\sum_{j=0}^{p} \varphi_j(tA)\,v_j
\;=\;
\sum_{j=0}^{p} \frac{1}{j!}\,v_j
\;+\;
U\!\left(t\sum_{j=0}^{p} \varphi_{j+1}(tM)\,(W^{\!T} v_j)\right).
\end{equation}
We use the right-hand side of~\eqref{lin.comb.low.rank} as the reference; the small matrices
$\vphi_i(tM)$ are computed by the MATLAB code \phifunm\ of Al-Mohy and Liu~\cite[Alg.~5.1]{alli25}.

We consider three cores:
\begin{eqnarray}
\label{M1}
  M_1 &=& \begin{bmatrix}
          0 & 10 \\
          -10 & 0
        \end{bmatrix},\quad n=2\times10^5,\quad p=3, \\
\label{M2}
  M_2 &=& \begin{bmatrix}
          -1 & 10^5 \\
           0 & -10 \end{bmatrix},\quad n=4\times10^5,\quad p = 4,\\
\label{M3}
  M_3 &=& \begin{bmatrix}
          0 & e & 0 \\
          -(a+b) & -d & a \\
          c & 0 & -c
        \end{bmatrix},\quad n= 5\times10^5,\quad p = 2,
\end{eqnarray}
with $a=2\times10^{10}$, $b=4\times10^8/6$, $c=200/3$, $d=3$, and $e=10^{-8}$.
The matrix $M_3$ was used by Moler~\cite{Moler2012} when testing \expm\ alongside
the algorithm of Al-Mohy and Higham~\cite[Alg.~5.1]{alhi09a}, which underlies \expm\ since release R2015b.

For each $M_i$, we construct the operator $x\mapsto Ax$, randomly generate $V$, and
evaluate $\sum_{j=0}^{p}\varphi_j(tA)v_j$ at several $t$ using \phimv\ (with $\alpha=1$),
and using \bamphi\ and \kiops\ with each $v_j$ replaced by $v_j/t^j$.
We set $\tol=2^{-53}$ for \kiops\ to match the defaults of the other routines.
If a routine fails internally we record a breakdown (\textsc{BD}); if it
does not return an answer we record \textsc{NA}; and if the run exceeds 10 minutes
we terminate it and record a timeout (\textsc{TO}).
We report relative $1$-norm errors (vs.\ \eqref{lin.comb.low.rank}) and run times (s).

Table~\ref{tab:imgeig-compact} shows the results for the normal operator associated with the
skew-symmetric $M_1$. All routines achieve near machine accuracy for small $t$ and remain very
accurate as $t$ increases. \kiops\ is the fastest here, with \bamphi\ close behind and relatively
insensitive to $t$. The cost of \phimv\ increases with $t$, consistent with the growth of the spectral radius of $tA$.

Table~\ref{tab:nonnorm-compact} highlights the effect of nonnormality of the operator associated with the core $M_2$.
\phimv\ maintains reliable accuracy across all $t$ with moderate runtime growth.
\bamphi\ is also accurate for $t\le 10$ but becomes more expensive and less accurate for larger $t$.
\kiops\ is fast when it succeeds but does not return for $t\ge 50$.
Overall, \phimv\ offers the most robust performance over the full $t$-range on this nonnormal test.

Table~\ref{tab:moler-compact} uses Moler’s core. The matrix $M_3$ mixes tiny and very large scales, has eigenvalues $-63.4$, $-6.2$, and $-0.11$, and a departure-from-normality measure of $2.8\times10^{10}$. Consequently, it is notoriously challenging.
 \phimv\ is the only routine to succeed for all $t$, 
with errors from $10^{-10}$ at $t=10^{-5}$ to about $10^{-5}$ for $t\in\{\,10^{-1},10\,\}$.
\bamphi\ frequently breaks down (\textsc{BD}) or times out, and \kiops\ returns only for small $t$
(up to $10^{-1}$), failing thereafter. These outcomes underline the challenge of strong nonnormality and demonstrate the robustness of our algorithm.

In summary, all three routines deliver high accuracy on benign cases, but their behaviors diverge as the step size and nonnormality increase. \phimv\ proves the most reliable overall: it returns solutions across a wide range of settings, including highly nonnormal cases;
its main drawback is higher cost as $t$ grows due to a larger scaling parameter. \bamphi\ is highly accurate and competitively fast on easy to moderately challenging problems, yet its runtime can rise sharply for large $t$ and its accuracy may deteriorate—or it may occasionally break down—under severe nonnormality. \kiops\ is often the fastest when conditions are favorable (e.g., well-behaved operators and smaller $t$), but its reliability diminishes as $t$ grows and nonnormality increases, where failures to return or timeouts become more likely. In short, \phimv\ trades a modest increase in cost on easy cases for markedly improved robustness, while \bamphi\ and \kiops\ offer speed and accuracy primarily when the problem is not highly nonnormal.

\begin{figure}[t]
  \centering
  \includegraphics[width = 11cm]{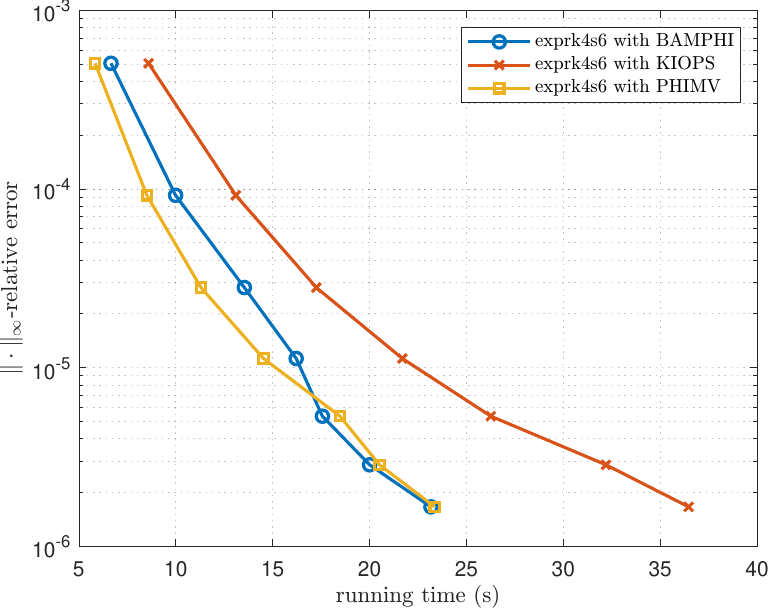}
  \caption{ADR test with \textsc{expRK4s6}: execution time (s) vs.\ $\|\cdot\|_\infty$-relative error for \bamphi, \kiops, and \phimv, all with $tol=10^{-7}$. Time steps are $h_i=2^{-8}(t_{\mathrm{end}}-t_0)/(i+1)$, $i=1,\ldots,7$. Reference solution: \texttt{ode15s} with \texttt{RelTol} $=$ \texttt{AbsTol} $=10^{-12}$.}

  \label{fig:adr}
\end{figure}
%
\paragraph{Fourth experiment: Advection–Diffusion–Reaction (ADR).}
We consider the 2D ADR~\cite{ccz23, grt18}
\begin{equation}\label{eq:adr}
  u_t
  \;=\;
  \varepsilon\,\Delta u
  \;-\; \alpha\,(u_x + u_y)
  \;+\; \gamma\,u\!\left(u-\tfrac12\right)(1-u),
\end{equation}
posed on the space–time cylinder $[t_0,t_{\mathrm{end}}]\times\Omega$ with
$\Omega=[0,1]^2$, $t_0=0$, and $t_{\mathrm{end}}=\tfrac12$. The parameters are
$\varepsilon=10^{-3}$, $\alpha=-\tfrac12$, and $\gamma=1000$. The initial condition is
\[
  u(0,x,y)=256\,x^2y^2(1-x)^2(1-y)^2.
\]
Boundary conditions are homogeneous Neumann on the edges where $xy=0$
(i.e., $x=0$ or $y=0$) and homogeneous Dirichlet on the remaining edges
($x=1$ or $y=1$):
\[
  \partial_\nu u(t,x,y)=0 \ \text{on } \{(x,y)\in\partial\Omega:\; xy=0\},\qquad
  u(t,x,y)=0 \ \text{on } \{(x,y)\in\partial\Omega:\; xy\neq 0\}.
\]

We discretize \emph{in space} on a uniform grid with $N_x=N_y=100$ points per
direction, obtaining a semi-discrete ODE of the form \eqref{ivp},
$u'(t)=A\,u(t)+g(u(t))$, where $A\in\R^{n\times n}$, $n=10^4$, collects the diffusion–advection terms and
$g$ the pointwise reaction. For time integration we use the scheme \textsc{expRK4s6} \eqref{expRK4s6} with step sizes
$h_i=2^{-8}\,(t_{\mathrm{end}}-t_0)/(i+1)$ for $i=1,\ldots,7$. We implement the routines with $\tol=10^{-7}$.
The routine \phimv\ uses the \emph{block} variant of our algorithm
(Algorithm~\ref{alg:phifun}) to evaluate parallel stages simultaneously.

Figure~\ref{fig:adr} shows the time–accuracy trade-off for this setup, plotting
executuion time (s) versus $\|\cdot\|_\infty$-relative error. The curves for \phimv\ and \bamphi\ lie close together and consistently to the left of \kiops, indicating that they reach a given accuracy in less time. At moderate accuracies, \phimv\ is marginally more efficient than \bamphi; at the tightest accuracy (about $10^{-6}$) they nearly coincide. In contrast, \kiops\ reduces the error smoothly with time but remains to the right across the range, requiring more time to match the accuracies achieved by \phimv\ and \bamphi. The reference solution is computed by the MATLAB function \texttt{ode15s} with tolerances: \texttt{RelTol} $=$ \texttt{AbsTol} $=10^{-12}$.
\section{Conclusions.}\label{sec.conc}
We presented a matrix-free algorithm for evaluating linear combinations of $\varphi$-actions,
$w_i=\sum_{j=0}^{p}\alpha_i^{\,j}\,\varphi_j(t_iA)v_j$. It combines the scaling and recovering method with a truncated Taylor series and, \emph{a priori}, determines a spectral shift and a scaling parameter by minimizing a power-based objective for the shifted operator; these two quantities are computed once and then reused across subsequent calls (e.g., across stages or time steps), avoiding per-call retuning. Accuracy is user-controlled (ultimately limited by the working precision), and an adaptive stopping test avoids unnecessary work. The computational cost is also easily predictable: for a given $t$ it is essentially proportional to the effective scaling count $\lceil |t|\,s\rceil$ (the product of the time-step magnitude and the selected scaling parameter), so the cost grows \emph{linearly} with this product. A block variant decouples the stage abscissae $t_i$ from the polynomial weights $\{\alpha_i^{\,j}\}$, enabling simultaneous evaluation and improving cache locality and Level-3 BLAS utilization when the operator acts on blocks. Extensive numerical experiments show that the algorithm attains high accuracy on benign problems and maintains reliable accuracy for larger time steps and severe nonnormality. Compared with existing Krylov-based algorithms, it offers a favorable robustness–cost trade-off on challenging instances.

\section*{Funding}
This work was supported by the Deanship of Scientific Research at King Khalid University Research Groups Program (grant RGP. 1/318/45).


\def\noopsort#1{}\def\hbk{hardback}\def\pbk{paperback}


\end{document}